\newcommand{\R}{{\mathbb R}} 
\newcommand{\Nat}{\mathbb{N}} 
\newcommand{\X}{\mathcal{X}} 
\newcommand{\Z}{\mathcal{Z}} 
\newcommand{\U}{\mathcal{U}} 
\newcommand{\C}{\mathcal{C}} 
\newcommand{\List}{\mathcal{L}} 
\newcommand{\lb}[1]{\underline{#1}} 
\newcommand{\ub}[1]{\overline{#1}}
\newtheorem{thm}{Theorem}
\newtheorem{lem}[thm]{Lemma}
\newtheorem{prop}[thm]{Proposition}
\newtheorem{defn}{Definition}
\newtheorem{alg}{Algorithm}
\newtheorem{exmp}{Example}
\begin{document}

\begin{center}
{\huge
\bf
\textsf{Rigorous constraint satisfaction\\[2mm] for sampled linear systems}
}

\renewcommand{\thefootnote}{$\dagger$} 

\vspace{5mm}
{
Moritz Schulze Darup\footnotemark[1]
}
\vspace{2mm}

  \footnotetext[1]{M. Schulze Darup is with 
the Control Group, Department of Engineering Science,
        University of Oxford, Parks Road, Oxford OX1 3PJ, UK.
        E-mail: {\tt moritz.schulzedarup@rub.de}.}
\end{center}

\paragraph{Abstract.}
We address a specific but recurring problem related to sampled linear systems. In particular, we provide a numerical method for the rigorous verification of constraint satisfaction for linear continuous-time systems between sampling instances. The proposed algorithm combines elements of classical branch and bound schemes from global optimization with a recently published procedure to bound the exponential of interval matrices.

\paragraph{Keywords.}
Sampled linear system, state and input constraints, branch and bound, interval arithmetic, matrix exponential.

\section{Introduction and Problem Statement}
\label{sec:problemStatement}

We consider the continuous-time linear system
\vspace{-0.5mm}
\begin{equation}
\label{eq:sysCont}
\dot{x}(t) = A \, x(t) + B \, u(t), \qquad x(0)=x_0
\vspace{-0.5mm}
\end{equation}
with state and input constraints of the form 
\vspace{-0.5mm}
\begin{equation}
\label{eq:constraintsCont}
x(t) \in \X  \quad \text{and} \quad u(t) \in \U
  \quad \text{for every} \,\, t \in \R_0
  \vspace{-0.5mm}
 \end{equation}
under piecewise constant control
\vspace{-0.5mm}
\begin{equation}
\label{eq:uZOH}
u(t) = u(t_k)  \quad \text{for every} \,\, t \in [k\,\Delta t, (k+1)\,\Delta t),
\vspace{-0.5mm}
\end{equation}
where $\Delta t >0$ denotes the sampling time and where $t_k := k\,\Delta t$ for every $k \in \Nat$. The sets $\X\subset \R^n$ and $\U\subset \R^m$ are assumed to be convex and compact polytopes containing the origin as an interior point.
During controller design (and controller evaluation), system~\eqref{eq:sysCont} is usually replaced by the discrete-time system
\vspace{-0.5mm}
\begin{equation}
\label{eq:sysDisc}
x(t_{k+1}) = \widehat{A}\,x(t_k)+\widehat{B}\,u(t_k), \qquad x(0)=x_0
\vspace{-0.5mm}
\end{equation}
with $\widehat{A} := \exp(A\,\Delta t)$ and $\widehat{B}:=\int_{0}^{\Delta t} \exp(A\,\tau)  \, \mathrm{d}\tau \,B$. 
While the discretized system and the continuous-time system coincide at all sampling instances, it is well-known that the continuous-time trajectory may violate the state constraints even though the discrete-time counterpart does not (see, e.g., the motivating example
in~\cite{Sopasakis2014}).
This potential problem can be prevented by computing adapted constraints for the discretized system such that constraint satisfaction of~\eqref{eq:sysDisc} w.r.t.~the adapted constraints implies constraints satisfaction of~\eqref{eq:sysCont} w.r.t.~the original constraints~\eqref{eq:constraintsCont}  (see, e.g.,\cite{Sopasakis2014,Beradi2001_ECC,SchulzeDarup2015_CDC1}).

Comparing the methods  for the computation of adapted constraints in \cite{Sopasakis2014,Beradi2001_ECC,SchulzeDarup2015_CDC1}, it is peculiar that the procedures in \cite{Beradi2001_ECC} and \cite{SchulzeDarup2015_CDC1} both rely on similar non-convex optimization problems (cf. \cite[Thm.~5]{Beradi2001_ECC} and \cite[Eq. (15)]{SchulzeDarup2015_CDC1}). 
Roughly speaking, the underlying problem reads as follows. For a given state $x_0 \in \X$ and input $u_0 \in \U$ such that $\widehat{A} x_0 + \widehat{B} u_0 \in \X$ (i.e., the discretized systems satisfies the constraints), we are interested in checking whether the trajectory of the continuous-time systems violates the state constrains for some $t \in (0,\Delta t)$.

More formally, the problem of interest can be described along the following lines.
First note that the polytope $\X$ can be written in the form
$$
\X = \{ x \in \R^n \,|\, H x \leq \mathbf{1} \},
$$
where $H \in \R^{p \times n}$ and where $\mathbf{1} \in \R^p$ is a vector with all entries equal
to 1. Now, let $\varphi(t,x_0,u_0)$ denote the solution of~\eqref{eq:sysCont} at time $t \in [0,\Delta t]$ for an initial condition $x_0 \in \X$ and  a control action $u_0 \in \U$.
Then, the trajectory of the continuous-time system does obviously not violate the state constraints for any $t \in [0,\Delta t]$ if
\begin{equation}
\label{eq:conditionMaxSets}
  \max_{j\in \Nat_{[1,p]}} \max_{t\in[0,\Delta t]} e_j^T  H \, \varphi(t,x_0,u_0) \leq 1,
\end{equation}
where $e_j \in \R^p$ is the $j$-th Euclidean unit vector.
Taking into account that $\varphi(t,x_0,u_0) $ reads
\begin{equation}
\label{eq:phiTX0U0}
\varphi(t,x_0,u_0) = \exp(A\,t)\,x_0+ \int_{0}^{t} \exp(A\,\tau)  \, \mathrm{d}\tau \,B \, u_0
\end{equation}
for every $t \in [0, \Delta t]$, it is easy to see that  $e_j^T H \varphi(t,x_0,u_0)$ is in general not concave (nor convex) in $t$. 
Hence, verifying whether~\eqref{eq:conditionMaxSets} holds (or not) is a multivariate non-convex  optimization problem (OP). 
Fortunately, the l.h.s.~in~\eqref{eq:conditionMaxSets} can be easily decomposed into $p$ univariate OPs of the form
\begin{equation}
\label{eq:maxF}
f^\ast := \max_{t \in [0,\Delta t]} f(t),
\end{equation}
where $f: [0,\Delta t] \rightarrow \R$ is given by
\begin{equation}
\label{eq:f}
f(t):= h^T \left(\exp(A\,t)\,x_0+ \int_{0}^{t} \exp(A\,\tau)  \, \mathrm{d}\tau \,B \, u_0\right)
\end{equation}
with $h\in \R^n$.
Clearly,~\eqref{eq:conditionMaxSets} holds if $f^\ast \leq 1$ results from~\eqref{eq:maxF} for every $h \in \{ H^T e_1, \dots, H^T e_P\}$.

Following the argumentation in~\cite{Beradi2001_ECC} (and \cite{SchulzeDarup2015_CDC1}), although~\eqref{eq:maxF} is non-convex, it can be solved reliable since it is the search of the maximum of a scalar function on a scalar compact domain. While this observation is true, we can provide more elaborated solution strategies for~\eqref{eq:maxF} based on the special structure of the objective function in~\eqref{eq:f}. 
In this paper,  we address the rigorous (or global) solution of~\eqref{eq:maxF} using interval arithmetic (IA). 
More precisely, we intend to identify non-decreasing, non-increasing,  convex and concave segments of $f(t)$ on $[0,\Delta t]$ based on interval inclusions for the first and second time-derivative of $f(t)$. Clearly, for such segments, local maxima can be easily computed and subsequently finding the global maximum is straightforward.

The paper is organized as follows.
We state basic notation and preliminaries in Sect.~\ref{sec:notation}.
The main result of the paper, i.e., a tailored branch and bound algorithm for the rigorous solution of~\eqref{eq:maxF}  is presented in Sect.~\ref{sec:computation}. 
Finally, the proposed method is illustrated with some  examples in Sect.~\ref{sec:example} before giving conclusions in Sect.~\ref{sec:Conclusion}.

\section{Notation and Preliminaries}
\label{sec:notation}

As mentioned in the introduction, we exploit IA to provide interval inclusions for $f(t)$ and its derivatives  
$$\frac{ \mathrm{d} f(t)}{\mathrm{d} t}  :=f^\prime(t) \qquad  \text{and} \qquad 
  \frac{ \mathrm{d}^2 f(t)}{\mathrm{d} t^2}  :=f^{\prime\prime}(t). $$
IA can be understood as the extension of operations associated with real numbers, like addition or multiplication, to intervals (see, e.g., \cite[Sect. 2.2]{Moore1979}).
In this paper, we only require a few interval operations summarized in the following lemma.

\begin{lem}[{\cite[Eqs. (2.14) and (2.19)]{Moore1979}}]
\label{lem:IA}
Let~$[c]\!=\![\lb{c},\ub{c}]$ $\subset \R$ and $[d]=[\lb{d},\ub{d}]\subset \R$ be intervals with $\lb{c}\leq \ub{c}$ and $\lb{d}\leq \ub{d}$. Define the intervals
\begin{align}
\nonumber
[c]+[d] &:= [\lb{c}+\lb{d},\ub{c}+\ub{d}] \quad \text{and} \\
\nonumber
[c]\times [d] &:= [\min\{\lb{c}\, \lb{d},\lb{c} \,\ub{d},\ub{c}\, \lb{d},\ub{c}\, \ub{d}\},\max\{\lb{c}\, \lb{d},\lb{c} \,\ub{d},\ub{c}\, \lb{d},\ub{c}\, \ub{d}\}].
\end{align}
Then, $c+d \in [c]+[d]$ and $c \, d \in [c] \times [d]$ for every $c \in [c]$ and every $d \in [d]$.
\end{lem}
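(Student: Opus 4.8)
The plan is to verify the two claimed inclusions separately, since they rest on independent arguments; the sum is immediate and the product requires only a short bilinearity argument. For the sum, given any $c \in [c]$ and $d \in [d]$ we have by hypothesis $\lb{c} \leq c \leq \ub{c}$ and $\lb{d} \leq d \leq \ub{d}$, and adding these two chains of inequalities termwise yields $\lb{c} + \lb{d} \leq c + d \leq \ub{c} + \ub{d}$, which is exactly $c + d \in [c] + [d]$. No further work is needed in this case.

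For the product, the key observation is that the map $(c,d) \mapsto c\,d$ is affine in each argument when the other is held fixed. I would first fix an arbitrary $d \in [d]$ and regard $c \mapsto c\,d$ as an affine function of $c$ on the compact interval $[\lb{c}, \ub{c}]$; such a function attains both its minimum and its maximum at the endpoints, so $c\,d$ lies between $\lb{c}\,d$ and $\ub{c}\,d$. Applying the same reasoning to each of these two endpoint expressions, now viewed as affine functions of $d$ on $[\lb{d}, \ub{d}]$, shows that $\lb{c}\,d$ and $\ub{c}\,d$ each lie between their values at $d = \lb{d}$ and $d = \ub{d}$. Chaining the bounds, I conclude that $c\,d$ is at least the smallest and at most the largest of the four corner products $\lb{c}\lb{d}$, $\lb{c}\ub{d}$, $\ub{c}\lb{d}$, $\ub{c}\ub{d}$, which is precisely the assertion $c\,d \in [c] \times [d]$.

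The only subtlety, and the reason the product formula takes a minimum and a maximum over all four corners rather than committing to a single fixed pairing, is that the sign of $d$ governs whether $c \mapsto c\,d$ is increasing or decreasing, so one cannot predict a priori which corner realizes the extremum. The endpoint-of-an-affine-function argument sidesteps this difficulty by never fixing a sign; the alternative of an exhaustive case analysis over the signs of the four interval endpoints would also work but is more tedious and less illuminating. I expect this bilinearity observation to be the only genuine point of the proof, with everything else being routine.
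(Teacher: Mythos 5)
Your proof is correct. Note, however, that the paper does not prove this lemma at all: it is imported verbatim from Moore's monograph (the citation to Eqs.\ (2.14) and (2.19) of \cite{Moore1979} is the entire justification), so there is no in-paper argument to compare yours against. Your two steps are both sound: the termwise addition of inequalities settles the sum, and the bilinearity argument for the product --- freeze $d$, observe $c \mapsto c\,d$ is affine and hence attains its extrema over $[\lb{c},\ub{c}]$ at the endpoints, then repeat in $d$ and chain the resulting bounds through the four corner products --- is exactly the standard reasoning behind Moore's formula. Your closing remark correctly identifies why the definition must take the min and max over all four corners rather than a fixed pairing: the sign of the fixed factor determines which endpoint is extremal, and the affine-endpoint argument avoids committing to a sign. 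This is a complete, self-contained proof of a result the paper deliberately leaves as a citation.
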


The rules in Lem.~\ref{lem:IA} can also be applied to compute the sum (or the multiplication) of an interval $[c]$ and a real number $d\in \R$. In this case, $[d]$ can be construed as a degenerated interval with $\lb{d}=\ub{d}=d$.
Moreover, by setting $[d]=[c]$, the interval multiplication can be used to evaluate $[c]$ raised to the power of $\kappa \in \Nat$. However, tighter inclusions result for the calculation rule given in \cite[Eq. (3.10)]{Moore1979}. In fact, we find $c^\kappa \in [c]^\kappa$ for every $c \in  [c]$, where
$$
[c]^\kappa := \left\{ \begin{array}{ll@{\,\,}l}
\,[\lb{c}^\kappa, \ub{c}^\kappa] & \text{if} \,\,\, \lb{c}>0 & \text{or} \,\,\, \kappa \,\, \text{is odd},\\
\,[\ub{c}^\kappa,\lb{c}^\kappa] &\text{if} \,\,\, \ub{c}<0 & \text{and} \,\,\, \kappa \,\, \text{is even}, \\
 \,[0, |[c]|^\kappa] & \text{if} \,\,\, 0\in[c] & \text{and} \,\,\, \kappa \,\, \text{is even},
  \end{array} \right.  
  $$
  and where the magnitude of $[c]$ is defined as 
$|[c]|:=\max \{ |\lb{c}|, |\ub{c}|\}$. In addition, we define the width of an interval as $w([c]):=\ub{c}-\lb{c}$.
IA can be easily extended to interval vectors and interval matrices. 
For two interval matrices $[C]=[\lb{C},\ub{C}]$ and $[D]=[\lb{D},\ub{D}]$ of appropriate size, the sum $[C]+[D]$ and the multiplication $[C]\,[D]$ are understood component-wise.
Analogously, the magnitude $|[C]|$ is defined component-wise, i.e.,
$(|[C]|)_{ij}:=|[\lb{C}_{ij},\ub{C}_{ij}]|$.
Finally, the infinity norm of an interval matrix is defined as
 the maximum of the
norms of the contained real matrices, i.e.,
$\| [C] \|_\infty := \max_{C \in [C]} \| C \|_\infty$.
It is easy to see, that this definition implies  $\| [C] \|_\infty =  \left\| |[C]| \right\|_\infty$.
Computing interval inclusions for~\eqref{eq:f} will mainly build on interval inclusions for matrix exponentials, which can be calculated as follows.

\begin{thm}[{\cite[Thm. 4.3]{Goldsztejn2014}}]
\label{thm:intervalExp}
Let $[C]=[\lb{C},\ub{C}]$ be an interval matrix with $\lb{C},\ub{C} \in \R^{q \times q}$. Let $k,l \in \Nat$ be such that $2^l \,(k +2) > \| [C]\|_\infty$. Define  $[C^\ast]:= \frac{1}{2^l}[C]$,
\begin{align}
\nonumber
[D^\ast] &:=  I_q + \frac{[C^\ast]}{1}\,\left( I_q + \frac{[C^\ast]}{2} \left(  \dots \left( I_q + \frac{[C^\ast]}{k} \right) \dots\right)\right) \\
\nonumber
& \quad + \frac{\| [C^\ast] \|_\infty^{k+1}}{(k+1)! \,(1-\frac{\| [C^\ast] \|_\infty}{k+2})} \, [-I_q,I_q],
\end{align}
and $[D]:=[D^\ast]^{2^l}$.
Then  $\exp(C) \in [D]$ for every $C \!\in [C]$.
\end{thm}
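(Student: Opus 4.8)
The plan is to combine a scaling-and-squaring argument with a remainder-controlled Taylor expansion, throughout relying on the inclusion property of interval arithmetic. The starting point is the identity $\exp(C) = \exp(2^l C^\ast) = (\exp(C^\ast))^{2^l}$, valid for every real matrix $C$ with $C^\ast = C/2^l$. Hence it suffices to (i) enclose $\exp(C^\ast)$ by $[D^\ast]$ for every $C^\ast \in [C^\ast]$ and then (ii) propagate this enclosure through the $2^l$-th interval power. The scaling by $2^l$ serves exactly to shrink the size of $C^\ast$ enough to control the truncated Taylor series, as quantified by the hypothesis $2^l(k+2) > \|[C]\|_\infty$.

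For step (i) I would split the exponential series into a degree-$k$ Taylor polynomial and its tail,
\begin{equation*}
\exp(C^\ast) = \sum_{j=0}^{k} \frac{(C^\ast)^j}{j!} + R_k, \qquad R_k := \sum_{j=k+1}^{\infty} \frac{(C^\ast)^j}{j!}.
\end{equation*}
The finite sum rewrites in the nested Horner form $I_q + \frac{C^\ast}{1}(I_q + \frac{C^\ast}{2}(\dots(I_q + \frac{C^\ast}{k})\dots))$, verified by expanding the brackets from the innermost one outward. Evaluating this division-free matrix expression with the interval $[C^\ast]$ in place of $C^\ast$, the component-wise extension of Lem.~\ref{lem:IA} (the fundamental inclusion property of IA) guarantees that the resulting interval matrix contains $\sum_{j=0}^{k}(C^\ast)^j/j!$ for every $C^\ast \in [C^\ast]$; this is precisely the first summand in the definition of $[D^\ast]$.

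The crux is to enclose the tail $R_k$ uniformly over $C^\ast \in [C^\ast]$. Writing $\gamma := \|[C^\ast]\|_\infty = \|[C]\|_\infty / 2^l$ and using submultiplicativity of the infinity norm together with $\|C^\ast\|_\infty \le \gamma$, I bound
\begin{equation*}
\|R_k\|_\infty \le \sum_{j=k+1}^{\infty} \frac{\gamma^j}{j!} = \frac{\gamma^{k+1}}{(k+1)!}\sum_{i=0}^{\infty} \frac{\gamma^i}{\prod_{r=2}^{i+1}(k+r)} \le \frac{\gamma^{k+1}}{(k+1)!}\sum_{i=0}^{\infty}\left(\frac{\gamma}{k+2}\right)^i.
\end{equation*}
Here the hypothesis $2^l(k+2) > \|[C]\|_\infty$ is exactly the statement $\gamma/(k+2) < 1$, so the geometric series converges to $(1-\gamma/(k+2))^{-1}$ and yields the scalar coefficient appearing in the last summand of $[D^\ast]$. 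Since the infinity norm dominates the absolute value of each entry, this shows $R_k$ lies entrywise within the interval-matrix remainder term, uniformly in $C^\ast \in [C^\ast]$; I expect this geometric-tail estimate and its linkage to the convergence hypothesis to be the main technical obstacle. Adding the two enclosures and invoking inclusion monotonicity of interval addition gives $\exp(C^\ast) \in [D^\ast]$ for all $C^\ast \in [C^\ast]$.

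Finally, for step (ii) I would raise the enclosure to the $2^l$-th power. Because interval matrix multiplication is inclusion isotone (a consequence of Lem.~\ref{lem:IA} applied component-wise), $M \in [D^\ast]$ implies $M^2 \in [D^\ast]^2$, and iterating this through the $l$ successive squarings that compute $[D]=[D^\ast]^{2^l}$ yields $M^{2^l} \in [D]$. Taking $M = \exp(C^\ast)$ and using the identity from the first paragraph gives $\exp(C) = (\exp(C^\ast))^{2^l} \in [D]$ for every $C \in [C]$, which is the claim.
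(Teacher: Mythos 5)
Your argument is correct, but note that the paper itself gives no proof of this statement: it is imported verbatim from \cite[Thm.~4.3]{Goldsztejn2014}, so there is nothing internal to compare against. Your reconstruction --- scaling $\exp(C)=(\exp(C/2^l))^{2^l}$, interval evaluation of the degree-$k$ Horner--Taylor polynomial via inclusion isotonicity, the geometric-series tail bound $\sum_{j>k}\gamma^j/j!\leq \frac{\gamma^{k+1}}{(k+1)!}(1-\frac{\gamma}{k+2})^{-1}$ under the hypothesis $\gamma=\|[C]\|_\infty/2^l<k+2$, and $l$ successive inclusion-isotone squarings --- is precisely the standard argument behind the cited result, and all steps check out (including the entrywise domination $|(R_k)_{ij}|\leq\|R_k\|_\infty$ used to place the tail in the interval remainder term).
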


Note that there exist many ways to evaluate $[D^\ast]^{2^l}$ as occurring in Thm.~\ref{thm:intervalExp}. In \cite[p. 61]{Goldsztejn2014}, the authors propose to use $l$ successive 
interval square operations, i.e., 
$$[D^\ast]^{2^l} = \left( \dots \left( [D^\ast]^2 \right)^2 \dots\right)^2\!.$$
An efficient procedure for the computation of the square of an interval matrix is presented in~\cite[Sect. 6]{Kosheleva2005}.

\section{Rigorous Solution via Interval Arithmetic}
\label{sec:computation}

In the following, we present a tailored method for the rigorous solution of the non-convex OP~\eqref{eq:maxF}.
Before describing the algorithm, we have to stress that there exists a number of situations
where~\eqref{eq:maxF} can be solved analytically. In this context,  note that~\eqref{eq:f} can be rewritten as 
\begin{equation}
\label{eq:fForANotInvertible}
f(t) = h^T \left( \int_{0}^{t} \!\exp(A\,\tau)  \, \mathrm{d}\tau \,(A\,x_0+B\,u_0)+x_0 \right)
\end{equation}
using the identity
$\int_{0}^{t} \exp(A\,\tau)  \, \mathrm{d}\tau \,A + I_n= \exp(A\,t)$. 
Obviously, trivial solutions result if $A\,x_0+B\,u_0=0$, $A=0$, or $h=0$.
In addition, an analytical solution of~\eqref{eq:maxF} is straightforward if 
$h$ is an eigenvector of $A^T$, i.e., if $h^T A = \lambda \, h^T$ for some $\lambda \in \R$.
To see this, note that the time-derivatives of~\eqref{eq:fForANotInvertible} are given by
\begin{align}
\label{eq:f1ForGeneralA}
f^\prime(t) &= h^T \exp(A\,t) \,(A\,x_0+B\,u_0) \quad \text{and} \\
\label{eq:f2ForGeneralA}
f^{\prime\prime}(t) &= h^T A \exp(A\,t) \,(A\,x_0+B\,u_0).
\end{align} 
Thus, $h$ being an eigenvector implies
 $f^{\prime\prime}(t)=\lambda f^\prime(t)$ for every $t \in [0,\Delta t]$, which eventually leads to a monotone function $f$. Consequently,  we obtain $f^\ast = \max \{h^T\! x_0, f(\Delta t)\}$.
Finally, the solution of~\eqref{eq:maxF} may be trivial if $A$ is nilpotent, i.e., if there exists an $r \in \Nat$ (with $1 \leq r \leq n$) such that $A^k \neq 0$ for $k \in \{0,\dots,r-1\}$ and $A^k = 0$ for~$k\geq r$. In this case,  $f$ can be rewritten as the polynomial
\begin{equation}
\label{eq:fNilpotent}
\!f(t)=  h^T \left( x_0  \!+\!\!\sum_{k=1}^{r-1}\! \frac{A^k x_0 + A^{k-1} B u_0}{k!} t^k + \!\frac{A^{r-1} B u_0}{r!} t^r\right). \!\!\!\! 
\end{equation}

If an analytical solution is not obvious, a numerical procedure to solve~\eqref{eq:maxF} may be required. 
We propose Alg.~\ref{alg:fMaxBranchAndBound} further below to compute $\epsilon$-optimal solutions to~\eqref{eq:maxF} according to Def.~\ref{defn:epsOpt}.

\begin{defn}
\label{defn:epsOpt}
Let $\epsilon\geq 0$. We call $\ub{f}^\ast \in \R$ an  \textit{$\epsilon$-optimal solution} to~\eqref{eq:maxF} if  $0 \leq \ub{f}^\ast - f^\ast \leq \epsilon$.
\end{defn}

As mentioned in the introduction, Alg.~\ref{alg:fMaxBranchAndBound} relies on identifying non-decreasing, non-increasing,  convex and concave segments of $f(t)$ on $[0,\Delta t]$ based on interval inclusions for the derivatives~\eqref{eq:f1ForGeneralA} and~\eqref{eq:f2ForGeneralA}.
As stated in the following proposition, such inclusions can be easily computed based on Thm.~\ref{thm:intervalExp}.

\begin{prop}
\label{prop:IntervalInclusions}
Let $[t] \subseteq [0, \Delta t]$ with $|[t]|>0$ and consider the interval matrix $[C]=A \,[t]$.  Let $k,l \in \Nat$ be such that $2^l \,(k +2) > \| [C]\|_\infty$ and define $[D]$ as in Thm.~\ref{thm:intervalExp}. Then, the interval inclusions
\begin{align}
\label{eq:f1Interval}
  [f^\prime] &=[\lb{f}^\prime,\ub{f}^\prime] & \!\!\!\!\! = \,\,& h^T [D] \, (A x_0 + B \,u_0), \quad \text{and}\\
\label{eq:f2Interval}
   [f^{\prime\prime}] & =[\lb{f}^{\prime\prime},\ub{f}^{\prime\prime}] & \!\!\!\!\! = \,\, & h^T A \, [D] \, (A x_0 + B \,u_0),
\end{align}
are such that
\begin{equation}
\label{eq:inclusionF1andF2}
f^\prime(t) \in [f^\prime]\,\,\,   \text{and} \,\,\,
f^{\prime\prime}(t) \in [f^{\prime\prime}] \quad \text{for every} \,\,\,t\in [t].\!
\end{equation}
\end{prop}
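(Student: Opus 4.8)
The plan is to reduce the claim to two facts already available in the excerpt: the enclosure property of the matrix exponential from Thm.~\ref{thm:intervalExp}, and the inclusion monotonicity of the elementary interval operations from Lem.~\ref{lem:IA}. The closed-form expressions~\eqref{eq:f1ForGeneralA} and~\eqref{eq:f2ForGeneralA} for $f^\prime(t)$ and $f^{\prime\prime}(t)$ serve as the starting point, so that everything hinges on first enclosing $\exp(A\,t)$ and then propagating that enclosure through the surrounding (real) factors.

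First I would fix an arbitrary $t \in [t]$ and argue that $A\,t \in [C]$. Since $[C]=A\,[t]$ is formed entrywise by multiplying the scalar interval $[t]$ with the real numbers $A_{ij}$, Lem.~\ref{lem:IA} (reading each $A_{ij}$ as a degenerate interval) gives $A_{ij}\,t \in A_{ij}\,[t] = [C]_{ij}$ for every pair $(i,j)$, hence $A\,t \in [C]$. Because $k,l$ are chosen so that $2^l\,(k+2) > \|[C]\|_\infty$, the hypotheses of Thm.~\ref{thm:intervalExp} are met for $[C]$, and since $A\,t$ is one of the real matrices contained in $[C]$, the theorem yields $\exp(A\,t) \in [D]$.

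Next I would push this enclosure through the remaining factors. For $f^\prime$, the vector $v := A\,x_0 + B\,u_0$ and the row vector $h^T$ are real, so by repeated application of the addition and multiplication rules of Lem.~\ref{lem:IA}---again treating their entries as degenerate intervals---the point evaluation $h^T \exp(A\,t)\,v$ lies in the interval evaluation $h^T\,[D]\,v = [f^\prime]$; combined with~\eqref{eq:f1ForGeneralA} this gives $f^\prime(t) \in [f^\prime]$. The argument for $f^{\prime\prime}$ is identical once $h^T A$ is computed as a single real row vector: the point value $h^T A \exp(A\,t)\,v$ from~\eqref{eq:f2ForGeneralA} is contained in $h^T A\,[D]\,v = [f^{\prime\prime}]$. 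As $t \in [t]$ was arbitrary, the inclusions~\eqref{eq:inclusionF1andF2} follow.

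I expect the only delicate point to be the bookkeeping behind propagating the enclosure through the products, i.e.\ invoking inclusion monotonicity: substituting point values into an interval expression never leaves the computed interval. This is the fundamental consequence of Lem.~\ref{lem:IA} applied iteratively along the fixed evaluation order of the matrix--vector products, and it holds irrespective of that order (which affects only the tightness, not the validity, of the enclosure). No nontrivial estimate is needed beyond this, so the proof amounts to verifying that the hypotheses of Thm.~\ref{thm:intervalExp} are satisfied and that the interval inclusion property composes.
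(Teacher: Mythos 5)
Your proof is correct and follows essentially the same route as the paper's: invoke Thm.~\ref{thm:intervalExp} to get $\exp(A\,t)\in[D]$ for every $t\in[t]$, then propagate this enclosure through the real factors in~\eqref{eq:f1ForGeneralA} and~\eqref{eq:f2ForGeneralA} via inclusion monotonicity. The paper states this in three sentences; you merely supply the bookkeeping (e.g.\ $A\,t\in[C]$ and the degenerate-interval reading of Lem.~\ref{lem:IA}) that the paper leaves implicit.
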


\begin{proof}
According to Thm.~\ref{thm:intervalExp}, we have $\exp(A t) \in [D]$ for every $t \in [t]$. Thus,  \eqref{eq:f1Interval} and~\eqref{eq:f2Interval} contain the r.h.s.~in~\eqref{eq:f1ForGeneralA} and \eqref{eq:f2ForGeneralA} for  every $t \in [t]$, respectively. Consequently, \eqref{eq:inclusionF1andF2} holds.  
\end{proof}

Clearly, if $\lb{f}^\prime \geq 0$ results from~\eqref{eq:f2Interval}, then $f(t)$ is non-decreasing on the time-interval $[t]$. Analogously, $\ub{f}^\prime \leq 0$, $\lb{f}^{\prime\prime} \geq 0$, or $\ub{f}^{\prime\prime} \leq 0$ guarantees $f(t)$ to be non-increasing, convex, or concave on $[t]$, respectively. In each of these cases, it is easy to compute the local maximum of $f(t)$ on $[t]$, i.e.,
\begin{equation}
\label{eq:fMaxLocal}
f^\dagger := \max_{t\in[t]} f(t)
\end{equation}
In fact, $f(t)$ being convex, non-decreasing, or non-increasing implies $f^\dagger = \max \{ f(\lb{t}), f(\ub{t})\}$, $f^\dagger = f(\lb{t})$, or $f^\dagger = f(\ub{t})$. Finally, if $f(t)$ is concave, solving~\eqref{eq:fMaxLocal} is a convex OP. 
In contrast, if $\lb{f}^\prime<0<\ub{f}^\prime$ and $\lb{f}^{\prime\prime} < 0<\ub{f}^{\prime\prime}$,  
 a straightforward computation of $f^\dagger$ may not be possible. However, even in this case, the bounds on the derivatives can be used to compute an upper bound for the local maximum according to Def.~\ref{defn:overestimator} and Lem.~\ref{lem:Overestimators}.

\begin{defn}
\label{defn:overestimator}
Let $[t] \subseteq [0,\Delta t]$ with $w([t])>0$. We call a function $g:[t] \rightarrow \R$ a \textit{suitable overestimator} for $f$ on $[t]$ if $f(t) \leq g(t)$ for every $t \in [t]$ and if the optimizer 
\begin{equation}
\label{eq:tAstOverestimator}
t^\dagger:= \arg \max_{t \in[t]} g(t)
\end{equation}
 can either be computed analytically or by solving a convex optimization problem.
\end{defn}

\begin{lem}
\label{lem:Overestimators}
Let $[t] \subseteq [0,\Delta t]$ with $w([t])>0$ and assume $[f^{\prime}]$ and $[f^{\prime\prime}]$ with $\lb{f}^\prime<0<\ub{f}^\prime$ and $\ub{f}^{\prime\prime}>0$ are such that~\eqref{eq:inclusionF1andF2} holds.
Then, the following three functions $g:[t] \rightarrow \R$ are suitable overestimations for $f$ on $[t]$.

\begin{enumerate}[itemsep=0mm,leftmargin=4mm,itemindent=0mm,labelsep=1mm]
\item The piecewise affine function
$$
g(t) := \left\{ \begin{array}{ll}
f(\lb{t}) + \ub{f}^\prime (t - \lb{t}) &\text{if} \,\,\,  t \leq t_c, \\
f(\ub{t}) - \lb{f}^\prime (\ub{t} -t ) & \text{otherwise},
\end{array} \right. 
$$ 
where  $t_c= \frac{\ub{f}^\prime \lb{t} - \lb{f}^\prime \ub{t}+f(\ub{t})-f(\lb{t})}{\ub{f}^\prime - \lb{f}^\prime}.$

\item The piecewise quadratic function
$$
g(t) := \left\{ \begin{array}{ll}
f(\lb{t}) + f^\prime(\lb{t})\,(t - \lb{t})+\frac{\ub{f}^{\prime\prime}}{2} (t - \lb{t})^2  & \text{if} \,\,\, t \leq t_c,\\
f(\ub{t}) - f^\prime(\ub{t})\,(\ub{t}-t)+ \frac{\ub{f}^{\prime\prime}}{2} (\ub{t}-t)^2 & \text{otherwise},
\end{array} \right. 
$$
where 
$$\!\!t_c:= \left\{ \!\begin{array}{ll} \frac{0.5\,\ub{f}^{\prime\prime} (\ub{t}^2-\lb{t}^2) +f^\prime(\lb{t})\, \lb{t}-f^\prime(\ub{t})\,\ub{t} +f(\ub{t})-f(\lb{t})}{\ub{f}^{\prime\prime} (\ub{t}-\lb{t}) + f^\prime(\lb{t})  - f^\prime(\ub{t})} & \text{if} \,\,\,\frac{f^\prime(\ub{t})- f^\prime(\lb{t})}{\ub{t}-\lb{t}} \!< \ub{f}^{\prime\prime}\!\!\!,   \\
\ub{t} & \text{otherwise}. \end{array} \right.$$

\item The concave function
 $
g(t) := f(t) + \frac{\ub{f}^{\prime\prime}}{2} (t - \lb{t})\, (\ub{t}-t).
$
\end{enumerate}
\end{lem}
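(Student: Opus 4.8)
The plan is to check, for each of the three functions, the two defining properties of a suitable overestimator in Def.~\ref{defn:overestimator}: first that $f(t)\leq g(t)$ on $[t]$, and second that the maximizer of $g$ is accessible either in closed form or through a convex program. The single analytic engine behind the overestimation property is Taylor's theorem with integral remainder, combined with the inclusion hypothesis $f^\prime(t)\in[f^\prime]$ and $f^{\prime\prime}(t)\leq \ub{f}^{\prime\prime}$ expressed in~\eqref{eq:inclusionF1andF2}; the computability property then follows from elementary shape considerations for each $g$.

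For the overestimation bound I would argue case by case. In the affine case~1, writing $f(t)=f(\lb{t})+\int_{\lb{t}}^{t} f^\prime(\tau)\,\mathrm{d}\tau$ and using $f^\prime(\tau)\leq\ub{f}^\prime$ gives $f(t)\leq f(\lb{t})+\ub{f}^\prime(t-\lb{t})$ on all of $[t]$, while expanding backward from $\ub{t}$ and using $f^\prime(\tau)\geq\lb{f}^\prime$ gives $f(t)\leq f(\ub{t})-\lb{f}^\prime(\ub{t}-t)$; since $g$ selects the first line for $t\leq t_c$ and the second otherwise, $f\leq g$ holds because both lines already dominate $f$ on the whole interval. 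In the quadratic case~2 the same idea at one order higher — first-order Taylor around $\lb{t}$ and around $\ub{t}$, using $f^{\prime\prime}(\sigma)\leq\ub{f}^{\prime\prime}$ together with the non-negative remainder weight — produces the two parabolic overestimators. The concave case~3 is immediate: the added term $\tfrac{\ub{f}^{\prime\prime}}{2}(t-\lb{t})(\ub{t}-t)$ is non-negative on $[t]$, since $\ub{f}^{\prime\prime}>0$ and both factors are non-negative there, hence $g\geq f$.

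For the optimizer in~\eqref{eq:tAstOverestimator} I would exploit the specific shapes. In case~1, $g$ is a ``tent'': its first piece has slope $\ub{f}^\prime>0$ and its second slope $\lb{f}^\prime<0$, so the maximum sits at the kink $t=t_c$, which is precisely the point at which the two affine pieces coincide (this is how the stated $t_c$ is derived). In case~2 both pieces are upward parabolas of identical curvature $\ub{f}^{\prime\prime}/2$, so their difference is affine and they meet at the single point $t_c$; a convex function attains its maximum on an interval at an endpoint, hence the maximizer of $g$ lies in the finite candidate set $\{\lb{t},t_c,\ub{t}\}$ and is obtained by comparison. In case~3, differentiating twice gives $g^{\prime\prime}(t)=f^{\prime\prime}(t)-\ub{f}^{\prime\prime}\leq 0$, so $g$ is concave and~\eqref{eq:tAstOverestimator} is a convex program, exactly as required by Def.~\ref{defn:overestimator}.

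The main obstacle is not the overestimation inequalities, which are routine once the remainders are set up, but the bookkeeping around $t_c$: one must confirm that $t_c\in[t]$ so that the piecewise definitions genuinely partition $[\lb{t},\ub{t}]$, and that $g$ coincides with the tighter of the two branch functions on each side of $t_c$. For case~1 this reduces to showing $t_c-\lb{t}=\big(f(\ub{t})-f(\lb{t})-\lb{f}^\prime(\ub{t}-\lb{t})\big)/(\ub{f}^\prime-\lb{f}^\prime)\geq 0$ together with the symmetric bound for $\ub{t}-t_c$, both of which follow from $\lb{f}^\prime\leq (f(\ub{t})-f(\lb{t}))/(\ub{t}-\lb{t})\leq\ub{f}^\prime$. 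For case~2 the analogous check hinges on the sign of the denominator $\ub{f}^{\prime\prime}(\ub{t}-\lb{t})+f^\prime(\lb{t})-f^\prime(\ub{t})$, which is exactly the quantity governed by the case split $\tfrac{f^\prime(\ub{t})-f^\prime(\lb{t})}{\ub{t}-\lb{t}}<\ub{f}^{\prime\prime}$; in the complementary situation the denominator is non-positive, the two parabolas no longer cross inside $[t]$, and setting $t_c=\ub{t}$ correctly collapses $g$ onto its single dominant branch.
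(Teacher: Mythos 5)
Your proposal is correct, and it is worth noting that the paper itself does not prove this lemma at all: it explicitly omits the proof, referring instead to Piyavskii, Breiman--Cutler, and Maranas--Floudas for the three overestimator types, and only records where the optimizer $t^\dagger$ of \eqref{eq:tAstOverestimator} lies (at the kink $t_c$ for type 1, in $\{\lb{t},t_c,\ub{t}\}$ for type 2, and via a convex OP for type 3). Your argument supplies exactly the details the paper outsources, and it does so in the standard way those references do: the fundamental theorem of calculus with $\lb{f}^\prime \leq f^\prime(\tau) \leq \ub{f}^\prime$ for the affine bounds, first-order Taylor with integral remainder and $f^{\prime\prime}(\sigma)\leq\ub{f}^{\prime\prime}$ for the quadratic bounds, and the sign of the added product term plus $g^{\prime\prime}=f^{\prime\prime}-\ub{f}^{\prime\prime}\leq 0$ for the concave shift. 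Your shape analysis of $g$ matches the paper's asserted conclusions: the hypothesis $\lb{f}^\prime<0<\ub{f}^\prime$ makes the type-1 tent increasing then decreasing so that $t^\dagger=t_c$; both type-2 branches are convex parabolas of equal curvature whose difference is affine, confirming $t^\dagger\in\{\lb{t},t_c,\ub{t}\}$; and type 3 yields a convex OP. Your bookkeeping on $t_c\in[\lb{t},\ub{t}]$ (via $\lb{f}^\prime\leq(f(\ub{t})-f(\lb{t}))/(\ub{t}-\lb{t})\leq\ub{f}^\prime$ in case 1, and via the sign of the denominator, governed by the stated case split, in case 2) is a genuine point of care that the paper glosses over entirely; note also that since each branch overestimates $f$ on all of $[t]$, the overestimation property itself is insensitive to where the split is placed, so the only role of $t_c\in[t]$ is to make the argmax candidates correct. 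In short: correct proof, filling a gap the paper leaves to its references rather than diverging from an argument the paper actually gives.
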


The overestimators listed in Lem.~\ref{lem:Overestimators} are adopted from \cite{Piyavskii1972}, \cite{Breiman1993}, and
\cite[Sect. 4]{Maranas1994b}. In fact, $|[f^\prime]|$ and $|[f^{\prime\prime}]|$ can be understood as local Lipschitz constants for $f(t)$ and $f^\prime(t)$ as exploited in \cite{Piyavskii1972} and \cite{Breiman1993}, respectively. We thus omit a detailed proof of Lem.~\ref{lem:Overestimators} and refer to \cite{Piyavskii1972,Breiman1993,Maranas1994b}. It is, however, important to note that the solution to~\eqref{eq:tAstOverestimator} reads $t^\dagger = t_c$ for the overestimator $g$ of type 1. 
For type~2, we find $t^\dagger \in \{\lb{t},t_c,\ub{t}\}$, which renders~\eqref{eq:tAstOverestimator} trivial. Finally, for type 3, solving~\eqref{eq:tAstOverestimator} is a convex OP. 
Based on Prop.~\ref{prop:IntervalInclusions} and Lem.~\ref{lem:Overestimators}, we are finally able to formulate an algorithm for the computation of
an $\epsilon$-optimal solution to~\eqref{eq:maxF}.

\begin{alg}
\label{alg:fMaxBranchAndBound}
Solution of~\eqref{eq:maxF} via branch and bound.

\begin{enumerate}[itemsep=0mm,leftmargin=5mm,itemindent=0mm,labelsep=1.5mm,
parsep=0.5mm,topsep=0mm]

\item 
Initialize the lower bound on the global maximum as $\lb{f}^\ast \leftarrow h^T x_0$.
Initialize the list $\List$ of tuples $([t],[f^\dagger])$, each containing a time-interval $[t]$ and bounds $[f^\dagger]$ on the local maximum of $f$ on $[t]$, as $\List \leftarrow \{([0,\Delta t], [-\infty,\infty])\}$.

\item \textbf{for each}  tuple $([t],[f^\dagger])$ in $\List$, for which the bounds on the local maximum read $[f^\dagger]=[-\infty,\infty]$, repeat the following steps.
\begin{enumerate}[itemsep=0mm,leftmargin=4mm,itemindent=0mm,labelsep=1mm,
parsep=0.5mm,topsep=0mm]

\item Compute $[f^\prime]$ and $[f^{\prime\prime}]$ according to Prop.~\ref{prop:IntervalInclusions} and define a suitable overestimator $g$ for $f$ on $[t]$ (e.g., according to Lem.~\ref{lem:Overestimators}).

\item  \textbf{if} $\lb{f}^\prime \geq 0$, set $[f^\dagger] \leftarrow [f(\ub{t}),f(\ub{t})]$.

  \textbf{else if} $\ub{f}^\prime \leq 0$, set $[f^\dagger] \leftarrow [f(\lb{t}),f(\lb{t})]$.

\textbf{else if} $\lb{f}^{\prime\prime}\! \geq  0$, compute $f^\dagger = \max \{f(\lb{t}),f(\ub{t})\}$ and set
$[f^\dagger] \leftarrow [f^\dagger ,f^\dagger ]$.

\textbf{else if} $\ub{f}^{\prime\prime}\! \leq 0$, solve~\eqref{eq:fMaxLocal} and set $[f^\dagger] \leftarrow [f^\dagger ,f^\dagger ]$.

\textbf{else}, solve~\eqref{eq:tAstOverestimator} and set
$[f^\dagger] \leftarrow [f(t^\dagger),g(t^\dagger)]$.

\item \textbf{if} $\lb{f}^\dagger > \lb{f}^\ast$, set $\lb{f}^\ast \leftarrow \lb{f}^\dagger$.

\end{enumerate}

\item Compute the upper bound $\ub{f}^\ast$ on the global maximum by taking the maximum of all local upper bounds $\ub{f}^\dagger$ of the tuples $([t],[f^\dagger])$ in $\List$.

\item \textbf{if} $w([f^\ast]) \leq \epsilon$,  \textbf{return} $\ub{f}^\ast$ and terminate.

\item \textbf{for each}  tuple  in $\List$ repeat the following step.
\begin{enumerate}[itemsep=0mm,leftmargin=5mm,itemindent=0mm,labelsep=1mm,
parsep=0.5mm,topsep=0mm]

\item \textbf{if} $\ub{f}^\dagger\! \leq \lb{f}^\ast$ and $w([f^\dagger]) \!> \epsilon$, remove tuple from $\List$. 

\end{enumerate}
\item Select the  tuple  $([t],[f^\dagger])$ with the largest width $w([f^\dagger])$ in $\List$ and remove it from $\List$. Compute $t_m = \frac{\lb{t}+\ub{t}}{2}$ and insert the tuples $([\lb{t},t_m],[-\infty,\infty])$ and $([t_m,\ub{t}],[-\infty,\infty])$ in $\List$. \textbf{go to} step 2.
\end{enumerate}

\end{alg}

In principle, Alg.~\ref{alg:fMaxBranchAndBound} is similar to established branch and bound procedures for global optimization (see, e.g., \cite{Piyavskii1972}, \cite[Sects. 6 to 13]{Hansen1979}, \cite[Sect. 3]{Breiman1993},  \cite[Sect. 6]{Maranas1994b}, or
\cite[Sect. 3]{Sergeyev1998}).
The main difference is that Alg.~\ref{alg:fMaxBranchAndBound} makes use of bounds on the first \textit{and} second derivative.
First, this allows to identify a number of segments where the local maximum can be computed exactly. Second, it gives some flexibility w.r.t.~the choice of suitable overestimators for the remaining segments. In fact, overestimators of type 1 (in Lem.~\ref{lem:Overestimators}) depend on $[f^\prime]$ while type 2 and 3 build on $[f^{\prime\prime}]$. 
Regarding the computational effort, the strategy to compute both interval inclusions may be inefficient in general.
Here, however, the simultaneous calculation of $[f^\prime]$ \textit{and} $[f^{\prime\prime}]$ does not significantly increase the computational load compared to solely calculating $[f^\prime]$ \textit{or} $[f^{\prime\prime}]$. In fact, due to the special structure of $f$, we easily evaluate $[f^\prime]=h^T [d]$ and $[f^{\prime\prime}]=h^T \!A\, [d]$ given the interval vector $[d]:=[D]\,(A x_0 + B u_0)$. Obviously, the computational effort to calculate $[d]$ is dominated by the computation of the interval inclusion $[D]$ for the matrix exponential.  

As stated in Prop.~\ref{prop:AlgEps}, Alg.~\ref{alg:fMaxBranchAndBound} is guaranteed to compute an $\epsilon$-optimal solution to~\eqref{eq:maxF} for every $\epsilon>0$.
In many cases, however, Alg.~\ref{alg:fMaxBranchAndBound} is capable to solve \eqref{eq:maxF} exactly, i.e., for $\epsilon=0$ (see Exmps.~\ref{exmp:1} through~\ref{exmp:3} in Sect.~\ref{sec:example}).

\begin{prop}
\label{prop:AlgEps}
Let $\epsilon>0$ and let $k, l \in \Nat$ be such that $2^l \,(k +2) > \| A \,[0,\Delta t] \|_\infty$. Then 
 Alg.~\ref{alg:fMaxBranchAndBound} terminates after finite time and returns an $\epsilon$-optimal solution to~\eqref{eq:maxF}.
\end{prop}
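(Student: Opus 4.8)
The plan is to establish the two assertions of Prop.~\ref{prop:AlgEps} separately: (i) \emph{correctness} --- whenever Alg.~\ref{alg:fMaxBranchAndBound} terminates it returns an $\epsilon$-optimal solution in the sense of Def.~\ref{defn:epsOpt}; and (ii) \emph{finite termination}. As a preliminary I would observe that the hypothesis $2^l(k+2) > \| A\,[0,\Delta t]\|_\infty$ guarantees $\| A\,[t]\|_\infty \leq \| A\,[0,\Delta t]\|_\infty < 2^l(k+2)$ for \emph{every} sub-interval $[t]\subseteq[0,\Delta t]$. Hence Thm.~\ref{thm:intervalExp}, and therefore Prop.~\ref{prop:IntervalInclusions}, applies on every interval generated by the algorithm (with a well-defined, positive remainder denominator), so the bounds $[f^\prime],[f^{\prime\prime}]$ in step~2a are always valid.

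For correctness I would carry two loop invariants, checked at step~4. The lower invariant $\lb{f}^\ast \leq f^\ast$ is immediate: $\lb{f}^\ast$ is initialized to $f(0)=h^Tx_0$ and every update in step~2c sets it to some $\lb{f}^\dagger\in\{f(\lb{t}),f(\ub{t}),f(t^\dagger),\max\{f(\lb{t}),f(\ub{t})\}\}$, i.e.\ an attained value of $f$. The upper invariant $f^\ast\leq\ub{f}^\ast$ rests on two observations. First, each processed $\ub{f}^\dagger$ upper-bounds the local maximum $f^\dagger=\max_{t\in[t]}f(t)$: in the monotone/convex/concave branches with equality, and in the overestimator branch via $f(t)\leq g(t)\leq g(t^\dagger)=\ub{f}^\dagger$ from Def.~\ref{defn:overestimator} and Lem.~\ref{lem:Overestimators}. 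Second, I would show by induction that any point $t_0$ whose covering interval was discarded in step~5 satisfies $f(t_0)\leq\lb{f}^\ast$: removal requires $\ub{f}^\dagger\leq\lb{f}^\ast$, so $f(t_0)\leq f^\dagger\leq\ub{f}^\dagger\leq\lb{f}^\ast$ at that moment, and $\lb{f}^\ast$ is non-decreasing. Since the tuples cover all of $[0,\Delta t]$ except the discarded part, this yields $f^\ast=\max(\lb{f}^\ast,\max_{\text{tuples}}f^\dagger)\leq\ub{f}^\ast$, using $\lb{f}^\ast\leq\ub{f}^\ast$ (the tuple attaining $\lb{f}^\ast$ cannot be removed and contributes $\ub{f}^\dagger\geq\lb{f}^\ast$) and $f^\dagger\leq\ub{f}^\dagger$. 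At termination step~4 gives $\ub{f}^\ast-\lb{f}^\ast=w([f^\ast])\leq\epsilon$, so $\lb{f}^\ast\leq f^\ast\leq\ub{f}^\ast$ forces $0\leq\ub{f}^\ast-f^\ast\leq\epsilon$.

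For termination the central estimate I would prove is that the local-bound width shrinks with the interval width uniformly in its location: there is a constant $W>0$, independent of $[t]$, with $w([f^\dagger])\leq W\,w([t])$ on every processed interval. In the four exact branches $w([f^\dagger])=0$; in the overestimator branch, using type~1 of Lem.~\ref{lem:Overestimators} together with $f(t_c)\geq f(\lb{t})+\lb{f}^\prime(t_c-\lb{t})$ (from $f^\prime\geq\lb{f}^\prime$ on $[t]$), one obtains $w([f^\dagger])=g(t_c)-f(t_c)\leq(\ub{f}^\prime-\lb{f}^\prime)\,w([t])=w([f^\prime])\,w([t])$, where $w([f^\prime])$ is bounded by a constant $W$ because inclusion-isotonicity of the construction in Thm.~\ref{thm:intervalExp} gives $[D]_{[t]}\subseteq[D]_{[0,\Delta t]}$ for all $[t]\subseteq[0,\Delta t]$ (types~2 and~3 give a gap that is at least as small, in fact quadratic in $w([t])$). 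The point I would flag as the genuine obstacle is that for \emph{fixed} $k,l$ the enclosures $[f^\prime],[f^{\prime\prime}]$ do \emph{not} collapse to points as $w([t])\to0$, since the Taylor remainder in Thm.~\ref{thm:intervalExp} retains a fixed positive width; convergence therefore cannot be argued from tightening the derivative bounds, but only from the overestimator gap, which still vanishes because it is controlled by the product $w([f^\prime])\,w([t])$. Granting this estimate, the rest is standard branch-and-bound bookkeeping: from $\lb{f}^\ast\geq\lb{f}^\dagger_{i^\ast}$ (step~2c) for the tuple $i^\ast$ attaining $\max_i\ub{f}^\dagger_i$, one gets $w([f^\ast])\leq w([f^\dagger_{i^\ast}])\leq\max_i w([f^\dagger_i])$. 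Thus, whenever step~4 fails to terminate, some interval has $w([f^\dagger])>\epsilon$ and hence $w([t])>\delta:=\epsilon/W$, and step~6 bisects such a ``large'' interval. The large intervals produced by repeated bisection of $[0,\Delta t]$ form a finite binary tree of depth below $\log_2(\Delta t/\delta)$, and each node is bisected at most once before being replaced by its two children, so only finitely many large-interval bisections can occur. After the last one every remaining interval has width at most $\delta$, whence $\max_i w([f^\dagger_i])\leq\epsilon$, $w([f^\ast])\leq\epsilon$, and the algorithm terminates at step~4.
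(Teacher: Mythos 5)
Your proposal is correct and follows essentially the same route as the paper: the same key estimate $g(t^\dagger)-f(t^\dagger)\leq w([f^\prime])\,w([t])\leq w([f_0^\prime])\,w([t])$ via inclusion isotonicity (including the correct observation that the enclosures themselves do not shrink for fixed $k,l$, so convergence must come from the overestimator gap), and the same dyadic-tree count showing only finitely many bisections can occur. The only difference is presentational: you spell out the correctness half with explicit loop invariants, which the paper dispatches with ``it is easy to see.''
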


\begin{proof}
It is easy to see that Alg.~\ref{alg:fMaxBranchAndBound} provides an $\epsilon$-optimal whenever it terminates. Hence, it is sufficient to prove finite termination of the algorithm.
Clearly, Alg.~\ref{alg:fMaxBranchAndBound} terminates if (but not only if) we have $w([f^\dagger])\leq\epsilon$ for every tuple $([t],[f^\dagger])$ in the list $\List$. 
In fact, the upper bound on the global maximum then satisfies
$$
\ub{f}^\ast = \max_{([t],[f^\dagger]) \in \List} \ub{f}^\dagger
\leq \max_{([t],[f^\dagger]) \in \List} \lb{f}^\dagger + \epsilon = \lb{f}^\ast + \epsilon,$$
i.e., $w([f^\ast]) \leq \epsilon$.
As a direct consequence, the time-interval $[t]$ of a tuple $([t],[f^\dagger])$ satisfying $w([f^\dagger])\leq\epsilon$ will never be bisected in step 6 of Alg.~\ref{alg:fMaxBranchAndBound} (since this would contradict reaching step 6 after passing step 4 without termination). In the following, denote by $[f_0^\prime]$ and $[f_0^{\prime\prime}]$ the interval inclusions for $f^\prime$ and $f^{\prime\prime}$ on $[0,\Delta t]$ and let $j \in \Nat$ be such that
\begin{equation}
\label{eq:maxLeqEpsilon}
\max \left\{  w([f^\prime_0])\,\Delta \tau, \frac{w([f^{\prime\prime}_0])}{2} \, \Delta \tau^2\!, \frac{\ub{f}^{\prime\prime}_0}{8} \Delta \tau^2\right\} \leq \epsilon,
\end{equation}
where $\Delta \tau := \frac{\Delta t}{2^{j}}$. We obviously have
\begin{equation}
\label{eq:bisectionHeightJ}
[0,\Delta t] = \bigcup_{i=0}^{2^j-1} [i,i+1] \Delta \tau
\end{equation}
by construction. Consider any $i \in \{0,\dots,2^{j}-1\}$, set $[t]=[i,i+1]\,\Delta \tau$, and note that $w([t])=\Delta \tau$.
Further note that the  inclusions $[f^\prime]$ and $[f^{\prime\prime}]$ on $[t]$  satisfy $[f^\prime] \subseteq [f_0^\prime]$ and $[f^{\prime\prime}] \subseteq [f_0^{\prime\prime}]$ since  $[t]\subseteq [0,\Delta t]$ (and since all involved operations are \textit{inclusion increasing}; see \cite{Goldsztejn2014} for details). 
Now assume an overestimator of type 1 (as in Lem.~\ref{lem:Overestimators}) is applied.
We then find
\begin{align*}
g(t^\dagger) - f(t^\dagger) &\leq \max_{t \in [t]} g(t)-f(t) \\
& = \max_{t \in [t]} f(\lb{t})+ \ub{f}^\prime (t - \lb{t})-f(t) \\
& \leq \max_{t \in [t]} f(\lb{t})+ \ub{f}^\prime (t - \lb{t})-f(\lb{t})- \lb{f}^\prime (t - \lb{t}) \\
& = w([f^\prime]) \, w([t]) \leq w([f_0^\prime]) \, \Delta \tau \leq \epsilon,
\end{align*}
where the first and second relation hold due to $t^\dagger \in [t]$ and by definition of $g$, respectively. The third relation holds since 
$$
f(t) = f(\lb{t}) + \int_{\lb{t}}^t f^\prime(\tau) \,\mathrm{d} \tau \geq f(\lb{t})+\lb{f}^\prime (t - \lb{t})
$$
for every $t \in [t]$. Finally, the last relations hold due to $[f^\prime] \subseteq [f_0^\prime]$ and according to~\eqref{eq:maxLeqEpsilon}. Using analogous arguments, we obtain 
$g(t^\dagger) - f(t^\dagger) \leq \epsilon$
 also for overestimators of type 2 or 3. 
We thus find $w([f^\dagger])\leq \epsilon$ for the bounds on the local maximum of $f$ on $[t]$ according to step 2.(b) of Alg.~\ref{alg:fMaxBranchAndBound}. Since $i \in \{0,\dots,2^j-1\}$ was arbitrary, this observation holds for every time interval $[i,i+1]\Delta \tau$ on the r.h.s.~of~\eqref{eq:bisectionHeightJ}.
As a consequence, the number of required bisections in step 6 of Alg.~\ref{alg:fMaxBranchAndBound} is limited and the algorithm terminates after finite time. To see this, first note that $j$ and $i$ can be understood as the height and the position of a leaf node in a perfect binary tree, respectively. The binary tree can be associated with the bisection procedure. In fact, every inner node can be linked to the bisection of a time-interval. Now, the perfect binary tree with height $j$ refers to the worst-case scenario, where the bisection continues until we obtain the partition on the r.h.s.~of~\eqref{eq:bisectionHeightJ}. Since this tree contains $\sum_{i=0}^{j-1} 2^i = 2^{j} - 1$ inner nodes, we obtain a maximum of $2^j-1$ bisections.
\end{proof}

\section{Numerical Examples}
\label{sec:example}

We analyze four examples in the following.
The first two examples address technical systems taken from~\cite{Gutman1987} and~\cite{Sopasakis2014}. In contrast, Exmp.~\ref{exmp:3} and~\ref{exmp:4} are of academic nature. In fact, these examples were purely designed to challenge Alg.~\ref{alg:fMaxBranchAndBound}.

The application of Alg.~\ref{alg:fMaxBranchAndBound} requires to specify an error bound $\epsilon$. Moreover, the underlying computation of interval inclusions for matrix exponentials depends on the parameters $k,l \in \Nat$ (see Thm.~\ref{thm:intervalExp}). We set $\epsilon = 10^{-6}$ and $k=l=10$ for all examples.

\vspace{3mm}
\begin{figure}[htp] 

\psfrag{a}[c][c]{\footnotesize(a)}
\psfrag{b}[c][c]{\footnotesize(b)}
\psfrag{c}[c][c]{\footnotesize(c)}
\psfrag{d}[c][c]{\footnotesize(d)}
\psfrag{e}[c][c]{\footnotesize(e)}
\psfrag{F}[c][c]{\footnotesize(f)}
\psfrag{g}[c][c]{\footnotesize(g)}
\psfrag{h}[c][c]{\footnotesize(h)}

\psfrag{t}[c][c]{\footnotesize$t$}
\psfrag{f}[ct][cb]{\footnotesize$f(t)$}

\psfrag{x1}[c][c]{\footnotesize$x_1$}
\psfrag{x2}[ct][cb]{\footnotesize$x_2$}
\psfrag{x3}[rt][cb]{\footnotesize$x_3$}
\psfrag{xP}[c][c]{\footnotesize projection on $x_1$-$x_2$}

\psfrag{x11}[c][c]{\footnotesize$0.0$}
\psfrag{x12}[c][c]{\footnotesize$0.5$}
\psfrag{x13}[c][c]{\footnotesize$1.0$}

\psfrag{y11}[c][c]{\footnotesize$1.000$\,}
\psfrag{y12}[c][c]{\footnotesize$1.005$\,}

\psfrag{x21}[c][c]{\footnotesize$25.0$}
\psfrag{x22}[c][c]{\footnotesize$25.1$}

\psfrag{y21}[c][c]{\footnotesize$-0.5$}
\psfrag{y22}[cr][cr]{\footnotesize$0.0$}
\psfrag{y23}[cr][cr]{\footnotesize$0.5$}

\psfrag{x31}[c][c]{\footnotesize$0.00$}
\psfrag{x32}[c][c]{\footnotesize$0.25$}
\psfrag{x33}[c][c]{\footnotesize$0.50$}

\psfrag{y31}[c][c]{\footnotesize$0.94$}
\psfrag{y32}[c][c]{\footnotesize$1.00$}

\psfrag{x41}[c][c]{\footnotesize$-1.0$}
\psfrag{x42}[c][c]{\footnotesize$-0.5$}
\psfrag{x43}[c][c]{\footnotesize$ 0.0$}

\psfrag{y41}[c][c]{\footnotesize$-2.0$}
\psfrag{y42}[c][c]{\footnotesize$-1.8$}

\psfrag{x51}[c][c]{\footnotesize$0.0$}
\psfrag{x52}[c][c]{\footnotesize$0.5$}
\psfrag{x53}[c][c]{\footnotesize$1.0$}

\psfrag{y51}[cr][cr]{\footnotesize$-1.0$\!\!}
\psfrag{y52}[cr][cr]{\footnotesize$1.0$}

\psfrag{x61}[c][c]{\footnotesize$-1.0$}
\psfrag{x62}[c][c]{\footnotesize$0.0$}
\psfrag{x63}[c][c]{\footnotesize$1.0$}

\psfrag{y61}[c][c]{\footnotesize$-1.0$}
\psfrag{y62}[cr][cr]{\footnotesize\,\,$0.0$}
\psfrag{y63}[cr][cr]{\footnotesize$1.0$}

\psfrag{x71}[c][c]{\footnotesize$0.0$}
\psfrag{x72}[c][c]{\footnotesize$0.1$}
\psfrag{x73}[c][c]{\footnotesize$0.2$}

\psfrag{y71}[cr][cr]{\footnotesize$0.5$\!}
\psfrag{y72}[cr][cr]{\footnotesize$1.0$}

\psfrag{x81}[c][c]{\footnotesize$0.0$}
\psfrag{x82}[c][c]{\footnotesize$2.0$}
\psfrag{x83}[c][c]{\footnotesize$4.0$}

\psfrag{y81}[cr][cr]{\footnotesize$0.0$\!\!}
\psfrag{y82}[cr][cr]{\footnotesize$0.2$}

\begin{center}
\includegraphics[width=0.72\linewidth]{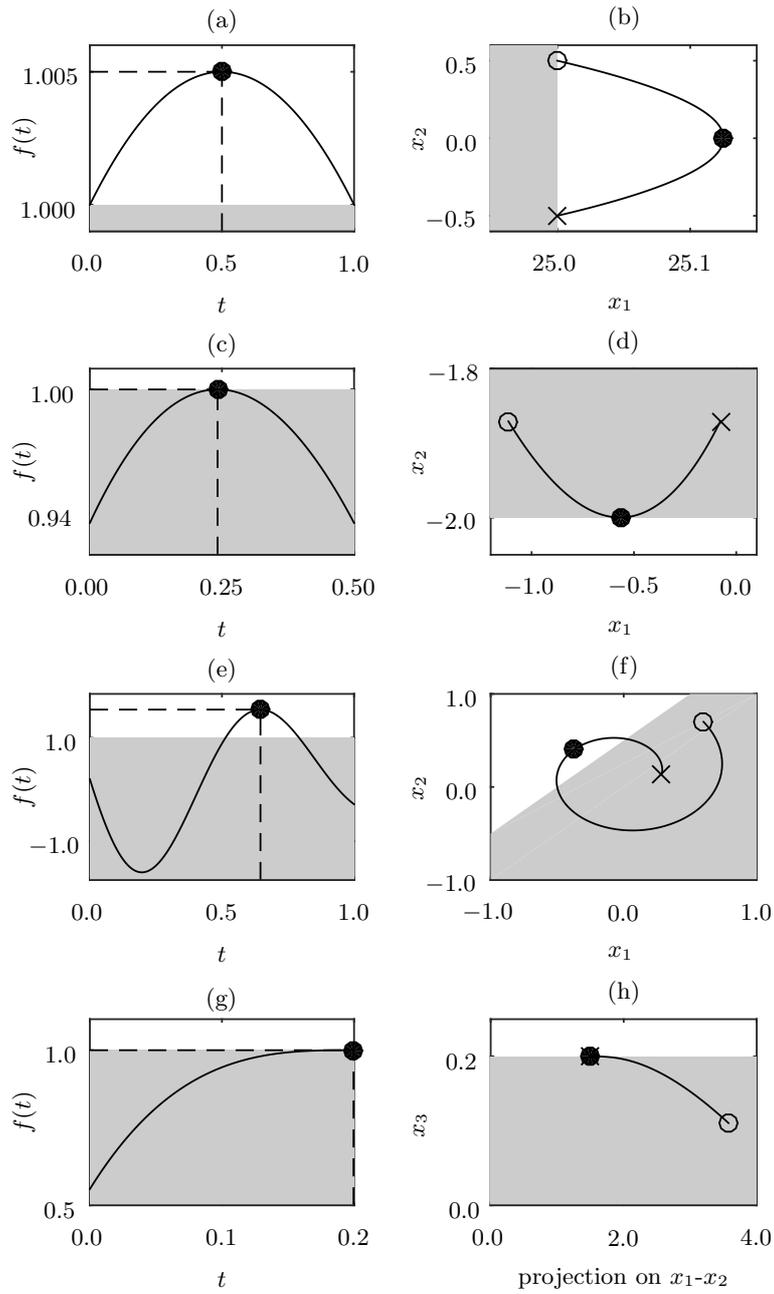}
\end{center}

\caption{
Illustration of $f(t)$ (left figures) and $\varphi(t)$ (right figures) for Exmps.~\ref{exmp:1} through \ref{exmp:4} (from top to bottom). In each figure, the point where the maximum $f^\ast$ is attained is marked with a filled circle. Open circles and crosses refer to initial states $x_0$ and final states $\varphi(\Delta t)$, respectively.
State constraints are violated outside the gray regions.
} 
\label{fig:example1to4}
\end{figure}

\begin{exmp}
\label{exmp:1}
We first analyze the double integrator in \cite{Gutman1987}
with the system matrices
$$A = \begin{pmatrix} 0 & 1 \\ 0 & 0  \end{pmatrix} \quad \text{and} \quad
B = \begin{pmatrix} 0 \\ 1  \end{pmatrix}$$
and the constraints $\X=\{x \in \R^2\,|\, |x_1| \leq 25, \,|x_2| \leq 5\}$ and $\U =[-1,1]$.  
As in \cite{Gutman1987}, we consider the sampling time $\Delta t =1$
and obtain the discretized system matrices
$$
\widehat{A} = \begin{pmatrix} 1 & 1 \\ 0 & 1  \end{pmatrix} \quad \text{and} \quad
\widehat{B} = \begin{pmatrix} 0.5 \\ 1.0  \end{pmatrix}.
$$
Obviously, for the initial state
$x_0 = (\, 25.0 \,\,\,\, 0.5 \,)^T \in \X$,  the only input $u_0 \in \U$ for which the discretized system satisfies the state constraints at the next sampling instant, i.e., for which $\widehat{A} x_0 + \widehat{B} u_0 \in \X$, is $u_0=-1$. In fact, for any $u_0 \in (-1,1]$, the state constraint $x_1 \leq 25$ will be violated. However, even for the choice $u_0=-1$,  
the continuous-time system may violate the state constraints for some $t \in (0,\Delta t)$.
To check whether the constraint $x_1 \leq 25$ will be violated (or not), we set
  $h = (\, 0.04 \,\,\,\, 0.00 \,)^T$ and solve~\eqref{eq:maxF}.
 Clearly, since  $A$ is nilpotent, \eqref{eq:maxF} can be easily solved analytically. We initially ignore this observation and apply Alg.~\ref{alg:fMaxBranchAndBound}.
 
Following the steps in Alg.~\ref{alg:fMaxBranchAndBound}, we first initialize the lower bound for the global maximum as $\lb{f}^\ast = h^T x_0 = 1$ and the list of tuples as $\List=\{([0,\Delta t],[-\infty,\infty])\}$. Since $[f^\dagger]=[-\infty,\infty]$, we then evaluate inclusions for $f^\prime$ and $f^{\prime\prime}$ on $[0,\Delta t]$ in step 2.(a) and obtain the (exact) intervals
  $$
  [f^\prime]=[-0.02,0.02]  \quad \text{and} \quad [f^{\prime\prime}]=[-0.04,-0.04].
  $$
  Since $\ub{f}^{\prime\prime}\leq 0$, the algorithm recognizes that $f$ is concave in step 2.(b), solves the convex OP 
$f^\dagger = \max_{ t \in [0,\Delta t]} f(t) = 1.005$,
  and sets $[f^\dagger] = [f^\dagger,f^\dagger]$.
Now, due to $\lb{f}^\dagger =1.005> \lb{f}^\ast$, the lower bound on the global maximum is updated in step 2.(c). 
Since $([0,\Delta t],[1.005,1.005])$ is the only tuple in $\List$, we move to step 3 and set $\ub{f}^\ast=1.005$. Finally, the algorithm terminates in step 4 since  $w([f^\ast]) = 0 \leq \epsilon$. 

For this example, it is easy to verify the computed result by analytically solving~\eqref{eq:maxF}. In fact, since $A$ is nilpotent  with degree $r=n=2$, we obtain 
  \begin{align}
\nonumber
f(t) &= h^T x_0 + h^T (A x_0 + B u_0)\,t + 0.5\,h^T \!A B u_0 \, t^2 \\
\nonumber
& =1 + 0.02 \, t -0.02 \,t^2
\end{align}
according to~\eqref{eq:fNilpotent}. We thus find $f^\ast=f(0.5)=1.005 = \ub{f}^\ast = \lb{f}^\ast$. Clearly, since $f^\ast>1$, the continuous-time system will violate the state constraint for some (here all) $t \in (0,\Delta)$. This can also be observed in Figs.~\ref{fig:example1to4}.(a) and \ref{fig:example1to4}.(b), where
 $f(t)$ and $\varphi(t)$ are illustrated, respectively. 
\end{exmp}

\begin{exmp}
\label{exmp:2}
We consider the example in \cite{Sopasakis2014} with
$$A = \begin{pmatrix} -0.7 & \textcolor{white}{+}0.1 \\ \textcolor{white}{+}2.0 & -0.1  \end{pmatrix} \quad \text{and} \quad
B = \begin{pmatrix}  2.0  \\ 1.0  \end{pmatrix}$$
plus $\X = \{x \in \R^n \,| \, \| x \|_\infty \leq 2 \}$ and $\U=[-1,1]$. As in~\cite{Sopasakis2014}, the sampling time is chosen as $\Delta t = 0.5$. We analyze whether the continuous-time system violates the constraint $x_2 \geq -2$ for the initial state
$x_0 = (\, -1.1135  \,\,\,\, -1.8708  \,)^T$ and the input
$u_0=0.9355$. To this end, we solve~\eqref{eq:maxF} with 
$h = (\,0.0  \,\,\,\, -0.5  \,)^T$ and obtain
$\ub{f}^\ast = \lb{f}^\ast = 0.9999$ using Alg.~\ref{alg:fMaxBranchAndBound}. Thus, the continuous-time system does not violate the state constraint $x_2 \geq -2$ for any $t \in [0,\Delta]$. This observation is important, since $(\,x_0 \,\,\, u_0\,)^T$ marks a vertex of the adapted constraint set $\widehat{\Z}$ as computed in~\cite[Sect. IV]{SchulzeDarup2015_CDC1}. In other words, $f^\ast \leq 1$ is required to confirm the results in~\cite{SchulzeDarup2015_CDC1}.
An illustration of $f(t)$ and $\varphi(t)$ can be found in Figs.~\ref{fig:example1to4}.(c) and \ref{fig:example1to4}.(d), respectively.
\end{exmp}

\begin{figure}[htp]

\psfrag{t}[c][c]{\footnotesize$t$}
\psfrag{f}[ct][cb]{\footnotesize$f(t)$}

\psfrag{x11}[c][c]{\footnotesize\,\,$0.0$}
\psfrag{x12}[c][c]{\footnotesize$0.5$}
\psfrag{x13}[c][c]{\footnotesize$1.0$}

\psfrag{y11}[c][c]{\footnotesize\,\,\,\,$-2$}
\psfrag{y12}[cr][cr]{\footnotesize$0$}
\psfrag{y13}[cr][cr]{\footnotesize$2$}
\psfrag{y14}[cr][cr]{\footnotesize$4$}
\psfrag{y15}[cr][cr]{\footnotesize$6$}

\begin{center}
\includegraphics[width=0.75\linewidth]{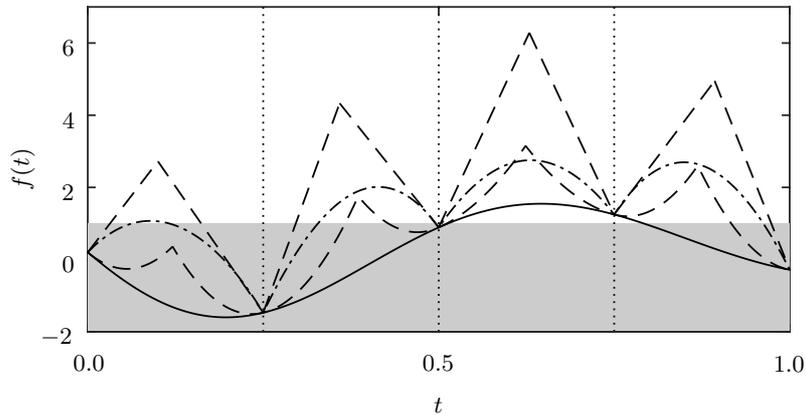}
\end{center}

\caption{
Illustration of some overestimators for $f(t)$ as in Exmp.~\ref{exmp:3} after three bisections in Alg.~\ref{alg:fMaxBranchAndBound}. The dashed lines refer to the piecewise linear and quadratic overestimators as introduced in Lem.~\ref{lem:Overestimators} (type 1 and 2), respectively. The dash-dotted curves show the concave overestimators (type 3).
} 
\label{fig:example3Details}
\end{figure}

\begin{exmp}
\label{exmp:3}
We consider the system matrices
$$A = \begin{pmatrix} -1 & \textcolor{white}{+}7 \\ -7 & -1  \end{pmatrix} \quad \text{and} \quad
B = \begin{pmatrix}  -1  \\ \textcolor{white}{+}0  \end{pmatrix},$$
the constraints $\X = \{x \in \R^n \,| \, \| x \|_\infty \leq 1,\,-2x_1+2 x_2 \leq 1 \}$ and $\U=[-1,1]$, and the sampling time $\Delta t = 1.0$. To check whether the continuous-time systems violates the constraint $-2x_1+2 x_2 \leq 1$ for 
$x_0 = (\, 0.6  \,\,\,\, 0.7  \,)^T$ and 
$u_0=1.0$, we solve~\eqref{eq:maxF}  with 
$h = (\,-2  \,\,\,\, 2  \,)^T$ and obtain $\ub{f}^\ast = \lb{f}^\ast= 1.5465$ using Alg.~\ref{alg:fMaxBranchAndBound}. Thus, the continuous-time systems violates the state constraints for some $t \in (0,\Delta t)$ as confirmed in Figs. \ref{fig:example1to4}.(e) and \ref{fig:example1to4}.(f). In contrast to Exmps.~\ref{exmp:1} and~\ref{exmp:2}, Alg.~\ref{alg:fMaxBranchAndBound} does not terminate without any bisection. In fact, as itemized in Tab.~\ref{tab:1to4}, we require eight bisections and the solution of three convex OP to identify $\ub{f}^\ast$ using the second overestimator proposed in Lem.~\ref{lem:Overestimators}.
A snapshot of the computed overestimators after three bisections is shown in Fig.~\ref{fig:example3Details}.
\end{exmp}

\begin{exmp}
\label{exmp:4}
We consider the system matrices
$$A = \begin{pmatrix} 0 & 6& 5\\ 5 & 1& 0 \\ 3& 2 &1   \end{pmatrix} \quad \text{and} \quad
B = \begin{pmatrix}  \textcolor{white}{+}1  \\ \textcolor{white}{+}0 \\-2  \end{pmatrix},$$
the constraints $\X = \{x \in \R^n \,| \, \| x \|_\infty \leq 0.2 \}$ and $\U=[-1,1]$, and the sampling time $\Delta t = 0.2$.
To check whether the continuous-time systems violates the constraint $x_3 \leq 0.2$ for 
$x_0 = (\, 2.6724  \,\,\,\, -2.3762 \,\,\,\,0.1105  \,)^T$ and 
$u_0=1.0$, we solve~\eqref{eq:maxF}  with 
$h = (\,0  \,\,\,\, 0 \,\,\,\,5  \,)^T$ and obtain $\ub{f}^\ast = 1.0000$ using Alg.~\ref{alg:fMaxBranchAndBound}.
In contrast to Exmps.~\ref{exmp:1} through~\ref{exmp:3}, the result $\ub{f}^\ast = 1.0000$ is not guaranteed to be exact. In fact, we obtain $\ub{f}^\ast-\lb{f}^\ast =  0.3123 \cdot 10^{-6}$ using the second overestimator in Lem.~\ref{lem:Overestimators}.   
The inexactness can be explained as follows. The example is constructed in such a way that $f^\ast=f(\Delta t)$ and $f^\prime(\Delta t)=f^{\prime\prime}(\Delta t)=0$. In other words, the maximum on $[0,\Delta t]$ is a saddle point of $f(t)$.
Thus, for any time-interval containing $\Delta t$, one of the interval inclusions $[f^{\prime}]$ and $[f^{\prime\prime}]$ has to be exact (at least $\lb{f}^\prime$ or $\ub{f}^{\prime\prime}$) in order to identify $f$ being non-decreasing or concave. However, since interval inclusions are inexact in general and in particular for this example, $\ub{f}^\ast$ has to be identified solely by using the overestimators $g$. 
Consequently, the number of required bisections is high compared to Exmps.~\ref{exmp:1} through~\ref{exmp:3} (see Tab.~\ref{tab:1to4}).
\end{exmp}

\begin{table}[htp]
\caption{Statistics on the application of Alg.~\ref{alg:fMaxBranchAndBound} to Exmps.~\ref{exmp:1} through~\ref{exmp:4}. For every example and every overestimator $g$ as in Lem.~\ref{lem:Overestimators}, we list the number of bisections and the number of solved convex OP necessary to identify $\ub{f}^\ast$. The itemized errors refer to $(\ub{f}^\ast - \lb{f}^\ast) \cdot 10^{6} $.}
\label{tab:1to4}
\begin{center}
\begin{tabular}{lccrccrcc}
\toprule
 & $g$ & \multicolumn{3}{c}{bisections} & \multicolumn{3}{c}{convex OP} & error \\
\midrule
 Exmp. 1 & 1--3 &  &0& & &1& & 0\\

\cmidrule{1-1}
 
Exmp. 2 & 1--3 &  &0& & &1& & 0\\

\cmidrule{1-1}
  & 1 &  &11& & &4& & 0\\
  Exmp. 3 & 2 &  &8& & &3& & 0\\
    & 3 &  &7& & &15& & 0\\

\cmidrule{1-1}

  & 1 &  &109& & &90& & 0.8149 \\
  Exmp. 4 & 2 &  &15& & &0& & 0.3123\\
    & 3 &  &14& & &29& & 0.3022 \\ 
    \bottomrule
\end{tabular}
\end{center}
\end{table}

\section{Conclusion}
\label{sec:Conclusion}

We presented a numerical method for the rigorous verification of constraint satisfaction for sampled linear systems.
In particular, we proposed a tailored branch and bound algorithm for the solution of the non-convex OP~\eqref{eq:maxF}
(resp.~\eqref{eq:conditionMaxSets}).
The core of the algorithm is a recently published procedure for the inclusion of interval matrix exponentials (see \cite{Goldsztejn2014}). 
Being able to solve~\eqref{eq:conditionMaxSets} for different $x_0$ and $u_0$ allows us to (offline) compute adapted state and input contstraints  according to \cite[Prop. 4 and Thm. 5]{Beradi2001_ECC} or \cite[Prop. 2]{SchulzeDarup2015_CDC1}. 
Satisfying these adapted constraints 
for the discretized system~\eqref{eq:sysDisc} finally guarantees  constraint satisfaction
of the continuous-time system~\eqref{eq:sysCont} w.r.t.~the original constraints~\eqref{eq:constraintsCont}.

The new method was illustrated with four examples. 
For every example, we were able to compute an $\epsilon$-optimal solution to the non-convex OP~\eqref{eq:maxF} (with $\epsilon=10^{-6}$). For three examples, the OP has even been solved exactly.
For the two technical examples taken from~\cite{Gutman1987} and~\cite{Sopasakis2014}, the algorithm terminated instantaneously without branching (i.e., without bisections). In fact, branching (and bounding) was only required for the two academic examples, which were designed to challenge Alg.~\ref{alg:fMaxBranchAndBound}. 
Such challenges are unlikely to appear in practice, however, since they were either caused by an inappropriately high sampling time $\Delta t$ (see Fig.~\ref{fig:example1to4}.(f)) or an extremely rare feature of $f$ in terms of a saddle point at the boundary of $[0,\Delta t]$ (see Fig.~\ref{fig:example1to4}.(g)).

Algorithm~\ref{alg:fMaxBranchAndBound} was particularly designed to solve problems of the form~\eqref{eq:maxF}. However, it can be used to solve any univariate OP on a convex domain, for which the objective function $f$ is of class $\C^2$ and for which interval inclusions for the first \textit{and} second derivative of $f$ can be computed efficiently. In this context, note that the list of suitable overestimators in Lem.~\ref{lem:Overestimators} is (by far) not complete. The overestimator of type 2, which performed most successfully for the analyzed examples (see Tab.~\ref{tab:1to4} and Fig.~\ref{fig:example3Details}) can for example be further improved using the results in~\cite{Sergeyev1998}.

\section{Acknowledgments}

Financial support by the German Research Foundation (DFG) through
the grant SCHU 2094/1-1 is gratefully acknowledged.

\section*{References}

\bibliographystyle{plain}

\begin{thebibliography}{10}

\bibitem{Sopasakis2014}
P.~Sopasakis, P.~Patrinos, H.~Sarimveis, {MPC} for sampled-data linear systems:
  Guaranteeing constraint satisfaction in continuous-time, IEEE Trans. Autom.
  Control 59~(4) (2014) 1088--1093.

\bibitem{Beradi2001_ECC}
L.~Berardi, E.~De~Santis, M.~D. Di~Benedetto, G.~Pola, Controlled safe sets for
  continuous time systems, in: Proc. of European Control Conference,
  2001, pp. 803--808.

\bibitem{SchulzeDarup2015_CDC1}
M.~Schulze~Darup, Efficient constraint adaptation for sampled linear systems,
  in: Proc. of the 54th {C}onference on {D}ecision and {C}ontrol, 2015.

\bibitem{Moore1979}
R.~E. Moore, Methods and applications of interval analysis, SIAM, 1979.

\bibitem{Goldsztejn2014}
A.~Goldsztejn, A.~Neumaier, On the exponentiation of interval matrices,
  Reliable Computing 20 (2014) 52--72.

\bibitem{Kosheleva2005}
O.~Kosheleva, V.~Kreinovich, G.~Mayer, H.~T. Nguyen, Computing the cube of an
  interval matrix is {NP}-hard., in: Proc. of the 2005 ACM Symposium on Applied
  Computing, 2005, pp. 1449--1453.

\bibitem{Piyavskii1972}
S.~A. Piyavskii, An algorithm for finding the absolute extremum of a function,
  Com. Maths. Math. Phys. 12 (1972) 57--67.

\bibitem{Breiman1993}
L.~Breiman, A.~Cutler, A deterministic algorithm for global optimization,
  Mathematical Programming 58 (1993) 179--199.

\bibitem{Maranas1994b}
C.~D. Maranas, C.~A. Floudas, Global minimum potential energy conformations of
  small molecules, Journal of Global Optimization 4~(2) (1994) 135--170.

\bibitem{Hansen1979}
E.~R. Hansen, Global optimization using interval analysis: {T}he
  one-dimensional case, J. Optim. Theory Appl. 29~(3) (1979) 251--293.

\bibitem{Sergeyev1998}
Y.~D. Sergeyev, Global one-dimensional optimization using smooth auxiliary
  functions, Mathematical Programming 81~(1) (1998) 127--146.

\bibitem{Gutman1987}
P.~O. Gutman, M.~Cwikel, {An algorithm to find maximal state constraint sets
  for discrete-time linear dynamical systems with bounded control and states},
  IEEE Trans. Autom. Control {32}~({3}) ({1987}) {251--253}.

\end{thebibliography}

\end{document}